\theoremstyle{plain}
\newtheorem{thm}{Theorem}[section]
\newtheorem{prop}[thm]{Proposition}
\newtheorem{lem}[thm]{Lemma}
\newtheorem{cor}[thm]{Corollary}
\newtheorem{ques}[thm]{Question}
\theoremstyle{definition}
\newtheorem{dfn}[thm]{Definition}
\newtheorem{rems}[thm]{Remarks}
\newtheorem{dfns-rems}[thm]{Definitions and Remarks}
\newtheorem{notas-rems}[thm]{Notations and Remarks}
\newtheorem{exmps-rems}[thm]{Examples and Remarks}
\begin{document}


\title[Symbolic powers of cover ideals]{On the minimal free resolution of symbolic powers of cover ideals of graphs}


\author[S. A. Seyed Fakhari]{S. A. Seyed Fakhari}

\address{S. A. Seyed Fakhari, School of Mathematics, Statistics and Computer Science,
College of Science, University of Tehran, Tehran, Iran.}

\email{aminfakhari@ut.ac.ir}


\begin{abstract}
For any graph $G$, assume that $J(G)$ is the cover ideal of $G$. Let $J(G)^{(k)}$ denote the $k$th symbolic power of $J(G)$. We characterize all graphs $G$ with the property that $J(G)^{(k)}$ has a linear resolution for some (equivalently, for all) integer $k\geq 2$. Moreover, it is shown that for any graph $G$, the sequence $\big({\rm reg}(J(G)^{(k)})\big)_{k=1}^{\infty}$ is nondecreasing. Furthermore, we compute the largest degree of minimal generators of $J(G)^{(k)}$ when $G$ is either an unmixed of a claw-free graph.
\end{abstract}


\subjclass[2010]{Primary: 13D02, 05E99; Secondary: 13C99}


\keywords{Cover ideal, Linear resolution, Regularity, Very well-covered graph}


\thanks{}


\maketitle


\section{Introduction} \label{sec1}

The study of minimal free resolution of monomial ideals and their powers is an interesting and
active topic of mathematics which employs methods of combinatorics in algebraic context. One of the main result in this area was obtained by
Fr${\rm \ddot{o}}$berg \cite[Theorem 1]{f}, who characterized all quadratic squarefree monomial ideals which have a linear resolution. Herzog, Hibi and Zheng \cite{hhz} proved that a quadratic squarefree monomial ideal $I$ has a linear resolution if and only if every power of $I$ has a linear resolution. It is also known \cite{ht} that polymatroidal ideals have a linear resolution. On the other hand, the powers
of polymatroidal ideals are again polymatroidal (see \cite{hh} and hence,
they have a linear resolution. However, it is not in general true that if a monomial ideal $I$ has a linear resolution, then its powers have the same property. Examples of this kind of ideals were provided by Terai (see \cite[remark 3]{c}) and Sturmfels \cite{s1}.

In this paper, we focus on powers of unmixed squarefree monomial ideals of height two. These ideals are naturally associated to simple graphs and are called cover ideals. The reason for this naming is that the cover ideal $J(G)$ of a graph $G$ is minimally generated by squarefree monomials corresponding to the minimal vertex covers of $G$ (see Section \ref{2} for precise definition of cover ideals). Herzog and Hibi \cite[Theorem 3.6]{hh''} proved that if $G$ is a bipartite graph such that $J(G)$ has a linear resolution, then $J(G)^k$ has a linear resolution, for each integer $k\geq 1$. By \cite[Corollary 2.6]{grv}, for every bipartite graph $G$, we have $J(G)^{(k)}=J(G)^k$, where $J(G)^{(k)}$ denotes the $k$th symbolic power of $J(G)$. Thus, the result of Herzog and Hibi essentially says that if $G$ is a bipartite graph such that $J(G)$ has a linear resolution, then $J(G)^{(k)}$ has a linear resolution, for every integer $k\geq 1$. In
\cite[Theorem 3.6]{s3}, we generalized this result to very well-covered graphs. In the same paper, we posed the following question.

\begin{ques} [\cite{s3}, Page 105 and \cite{s8}, Question 3.10] \label{qwellres}
Let $G$ be a very well-covered graph and suppose that $J(G)^{(k)}$ has a linear resolution, for some integer $k\geq 2$. Is it true that $J(G)$ has a linear resolution?
\end{ques}

In \cite[Corollary 3.7]{s3}, we proved that Question \ref{qwellres} has a positive answer for the special class of bipartite graphs. As the first main result of this paper, we give a positive answer to this question, for any very well-covered graph (see Proposition \ref{linwell}). Indeed, we prove a stronger result. In Theorem \ref{main}, we characterize all graphs $G$ with the property that $J(G)^{(k)}$ has a linear resolution for some integer $k\geq 2$. It turns out that $J(G)^{(k)}$ has a linear resolution for some (equivalently, for all) integer $k\geq 2$ if and only if $G$ is a very well-covered graph such that $J(G)$ has a linear resolution.

For a homogenous ideal $I$, let ${\rm reg}(I)$ denote the Castelnuovo--Mumford regularity of $I$. The next main result of this paper is motivated by a result due to Mart${\rm\acute{i}}$nez-Bernal et al. \cite{mmvv}. In fact, it is shown in \cite[Corollary 5.3]{mmvv} that for any bipartite graph $G$ and for every integer $k\geq 1$, the inequality$${\rm reg}(J(G)^k)\leq {\rm reg}(J(G)^{k+1})$$holds. As we mentioned above, for any bipartite graph $G$ and for each integer $k\geq 1$, we have $J(G)^{(k)}=J(G)^k$. Hence, the above inequality means that
\[
\begin{array}{rl}
{\rm reg}(J(G)^{(k)})\leq {\rm reg}(J(G)^{(k+1)}).
\end{array} \tag{$\dagger$} \label{dag}
\]
when $G$ is a bipartite graph. In Theorem \ref{main2}, we prove that inequality \ref{dag} holds for any arbitrary graph $G$, generalizing \cite[Corollary 5.3]{mmvv}.

For a monomial ideal $I$, let ${\rm deg}(I)$ denote the maximum degree of minimal monomial generators of $I$. As the last result of this paper, we study ${\rm deg}(J(G)^{(k)})$. In Corollary \ref{deguc}, we compute ${\rm deg}(J(G)^{(k)})$ when $G$ is either an unmixed or a claw-free graph. As a consequence, we see that for any such graph, ${\rm deg}(J(G)^{(k)})$ is a a linear function of $k$. Note that in general, ${\rm deg}(J(G)^{(k)})$ is not a linear function (even eventually), as it is shown by Dung at al. \cite[Theorem 5.15]{dhnt}.


\section{Preliminaries} \label{sec2}

In this section, we provide the definitions and basic facts which will be used in the next sections.

Let $G$ be a simple graph with vertex set $V(G)=\big\{x_1, \ldots,
x_n\big\}$ and edge set $E(G)$. For a vertex $x_i$, the {\it neighbor set} of $x_i$ is $N_G(x_i)=\{x_j\mid \{x_i, x_j\}\in E(G)\}$ and we set $N_G[x_i]=N_G(x_i)\cup \{x_i\}$ and call it the {\it closed neighborhood} of $x_i$. For a subset $F\subseteq V(G)$, we set $N_G[F]=\cup_{x_i\in F}N_G[x_i]$. For every subset $A\subset V(G)$, the graph $G\setminus A$ is the graph with vertex set $V(G\setminus A)=V(G)\setminus A$ and edge set $E(G\setminus A)=\{e\in E(G)\mid e\cap A=\emptyset\}$. A subgraph $H$ of $G$ is called induced provided that two vertices of $H$ are adjacent if and only if they are adjacent in $G$. The graph $G$ is {\it bipartite} if there exists a partition $V(G)=A\cup B$ such
that each edge of $G$ is of the form $\{x_i,x_j\}$ with $x_i\in A$ and $x_j\in B$. If moreover, every vertex of $A$ is adjacent to every vertex of $B$, then we say that $G$ is a {\it complete bipartite} graph and denote it by $K_{a,b}$, where $a=|A|$ and $b=|B|$. The graph $K_{1,3}$ is called a {\it claw} and the graph $G$ is said to be {\it claw--free} if it has no claw as an induced subgraph. A subset $W$ of $V(G)$ is called an {\it independent subset} of $G$ if there are no edges among the vertices of $W$. An independent subset $W$ of $G$ is a {\it maximal independent subset}, if $W\cup\{x\}$ is not an independent subset of $G$, for every vertex $x\in V(G)\setminus W$. A subset $C$ of $V(G)$ is called a {\it vertex cover} of $G$ if every edge of $G$ is incident to at least one vertex of $C$. A vertex cover $C$ is called a {\it minimal vertex cover} of $G$ if no proper subset of $C$ is a vertex cover of $G$. Note that $C$ is a minimal
vertex cover if and only if $V(G)\setminus C$ is a maximal independent set. The graph $G$ is called {\it unmixed} if all
maximal independent subsets of $G$ have the same number of elements. A graph $G$ without isolated vertices is {\it very well-covered} if $n$ is an even number and every maximal independent subset of $G$ has cardinality $n/2$. In particular, any unmixed bipartite graphs without isolated vertices is a very well-covered graph. The following result from \cite{crt}, determines the structure of very well-covered graphs.

\begin{prop} \label{strwell} \cite[Proposition 2.3]{crt}
Let $G$ be a very well-covered graph with $2h$ vertices. Then the vertices of $G$ can be labeled as $V(G)=\{x_1, \ldots, x_h, y_1, \ldots, y_h\}$ such that the following conditions are satisfied.
\begin{itemize}
\item[(i)] $X=\{x_1, \ldots, x_h\}$ is a minimal vertex cover of $G$ and $Y=\{y_1, \ldots, y_h\}$ is a maximal independent subset of $G$.
\item[(ii)] $\{x_i, y_i\}\in E(G)$, for each integer $i=1, \ldots, h$.
\item[(iii)] If $\{z_i, x_j\}$, $\{y_j, x_k\}\in E(G)$, then $\{z_i, x_k\}\in E(G)$
for distinct indices $i$, $j$ and $k$ and for $z_i\in \{ x_i, y_i\}$.
\item[(iv)] If $\{x_i, y_j\} \in E(G)$, then $\{x_i, x_j\} \notin E(G)$.
\end{itemize}
\end{prop}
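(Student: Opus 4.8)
To prove this structural description, the plan is to combine two ingredients. The first is elementary and already recorded above: the complement of a maximal independent set is a minimal vertex cover, so that in a very well-covered graph with $2h$ vertices every minimal vertex cover has exactly $h$ elements. The second is Favaron's classical characterization of very well-covered graphs, which I would use in the following form: a graph $G$ without isolated vertices is very well-covered if and only if it admits a perfect matching $M$ with the property that, for every edge $\{a,b\}\in M$, every vertex of $N_G(a)\setminus\{b\}$ is adjacent to every vertex of $N_G(b)\setminus\{a\}$; I will call this property $(\mathrm P)$ of the edge $\{a,b\}$.

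The nontrivial part is the existence of such a matching $M$; the rest is bookkeeping. A self-contained proof of this existence is a genuine piece of matching theory: one would argue through the Gallai--Edmonds decomposition of $G$, using that $|V(G)|=2h$ is even (so that a positive matching deficiency must be at least $2$) together with a careful analysis of maximal independent sets to force property $(\mathrm P)$. Since this is classical, I would cite Favaron for it rather than reproduce the argument, and I regard it as the one genuinely delicate step.

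Given $M=\{e_1,\dots,e_h\}$ with property $(\mathrm P)$, I would fix an arbitrary minimal vertex cover $X$ of $G$. By the first paragraph $|X|=h$; since the $e_i$ are pairwise disjoint edges whose union is $V(G)$ and $X$ meets each of them, $X$ must contain exactly one endpoint of each $e_i$. Write $e_i=\{x_i,y_i\}$ with $x_i\in X$ and $y_i\notin X$. Then $X=\{x_1,\dots,x_h\}$ is a minimal vertex cover and, being its complement, $Y=V(G)\setminus X=\{y_1,\dots,y_h\}$ is a maximal independent set, which is (i); and $\{x_i,y_i\}=e_i\in E(G)$, which is (ii). Note that conditions (iii) and (iv) refer only to the edges $e_i$ of $M$, so the particular choice of $X$ is immaterial.

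Finally, (iii) and (iv) follow by applying property $(\mathrm P)$ to the matching edge $e_j=\{x_j,y_j\}$. For (iii): if $\{z_i,x_j\}\in E(G)$ with $z_i\in\{x_i,y_i\}$ and $i,j,k$ distinct, then $z_i\in N_G(x_j)\setminus\{y_j\}$; if in addition $\{y_j,x_k\}\in E(G)$, then $x_k\in N_G(y_j)\setminus\{x_j\}$; hence property $(\mathrm P)$ gives $\{z_i,x_k\}\in E(G)$. For (iv): if $\{x_i,y_j\}\in E(G)$ and $\{x_i,x_j\}\in E(G)$ with $i\neq j$, then $x_i$ lies both in $N_G(x_j)\setminus\{y_j\}$ and in $N_G(y_j)\setminus\{x_j\}$, so property $(\mathrm P)$ would force $x_i$ to be adjacent to itself, which is impossible; hence $\{x_i,x_j\}\notin E(G)$. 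This completes the plan.
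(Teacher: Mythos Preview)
The paper does not supply a proof of this proposition; it is quoted as a background result from \cite[Proposition~2.3]{crt}, so there is no in-paper argument to compare your attempt against. Your derivation is correct: once Favaron's characterization is granted, items (i)--(iv) fall out exactly as you describe. In particular, your use of property $(\mathrm P)$ for the matching edge $\{x_j,y_j\}$ cleanly yields (iii), and your argument for (iv)---that a simultaneous adjacency $\{x_i,y_j\},\{x_i,x_j\}\in E(G)$ would place $x_i$ in both $N_G(x_j)\setminus\{y_j\}$ and $N_G(y_j)\setminus\{x_j\}$ and hence force a loop at $x_i$---is exactly the standard way to recover the ``no common neighbor'' clause of Favaron's property from the adjacency clause alone. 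The only point to flag is that the entire weight of the argument rests on the cited existence of the perfect matching $M$ with property $(\mathrm P)$, which you rightly identify as the one nontrivial step; your write-up is therefore at the same logical depth as the paper's bare citation, merely routed through Favaron's theorem rather than through \cite{crt} directly.
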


A {\it simplicial complex} $\Delta$ on the set of vertices $V(\Delta)=\{x_1,
\ldots, x_n\}$ is a collection of subsets of $V(\Delta)$ which is closed under
taking subsets; that is, if $F \in \Delta$ and $F'\subseteq F$, then also
$F'\in\Delta$. Every element $F\in\Delta$ is called a {\it face} of
$\Delta$, and its {\it dimension} is defined to be $\dim F=|F|-1$. The {\it dimension} of
$\Delta$ which is denoted by $\dim\Delta$, is $d-1$, where $d
=\max\{|F|\mid F\in\Delta\}$. A {\it facet} of $\Delta$ is a maximal face
of $\Delta$ with respect to inclusion. We say that $\Delta$ is {\it pure} if all facets
of $\Delta$ have the same dimension. A pure simplicial complex $\Delta$ is {\it strongly connected}
if for every pair of facets $F, G\in \Delta$, there is a sequence of facets $F=F_0, F_1, \ldots, F_m=G$ such that $\dim(F_i\cap F_{i+1})=\dim\Delta-1$,
for every integer $i$ with $0\leq i\leq m-1$. The {\it link} of $\Delta$ with respect to a face $F \in \Delta$ is the simplicial complex$${\rm lk_{\Delta}}F=\{G
\subseteq V(\Delta)\setminus F\mid G\cup F\in \Delta\}.$$

Let $S=\mathbb{K}[x_1, \dots, x_n]$ be the polynomial ring in $n$ variables over a field $\mathbb{K}$. For every
subset $F\subseteq V(\Delta)$, we set ${\it x}_F=\prod_{x_i\in F}x_i$. The {\it
Stanley--Reisner ideal of the simplicial complex $\Delta$ over $\mathbb{K}$} is the ideal $I_{
\Delta}$ of $S$ which is generated by those squarefree monomials $x_F$ with
$F\notin\Delta$. The {\it Stanley--Reisner ring of $\Delta$ over $\mathbb{K}$}, denoted by $\mathbb
{K}[\Delta]$, is defined to be $\mathbb{K} [\Delta]=S/I_{\Delta}$. The simplicial complex $\Delta$ is called {\it Cohen-Macaulay}, if its Stanley--Reisner ring is a Cohen-Macaulay ring.

Let $G$ be a graph. The {\it independence simplicial complex} of $G$ is defined by
$$\Delta(G)=\{A\subseteq V(G)\mid A \,\, \mbox{is an independent subset of}\,\,
G\}.$$It is easy to see that the Stanley--Reisner ideal of $\Delta(G)$ is the {\it edge ideal} of $G$ which is defined as$$I(G)=\big(x_ix_j\mid \{x_1, x_j\}\in E(G)\big)\subset S.$$ A graph $G$ is a {\it Cohen-Macaualy graph}, if $\Delta(G)$ is a Cohen-Macaulay simplicial complex. It is well-known that every Cohen-Macaulay graph is unmixed. The following result from \cite{mmcrty} provides a characterization for Cohen-Macaulay very well-covered graphs.

\begin{prop} \label{cmwell} \cite[Lemma 3.1]{mmcrty}
Let $G$ be a very well-covered graph with $2h$ vertices. Then $G$ is a Cohen-Macaulay graph if and only if the vertices of $G$ can be labeled as $V(G)=\{x_1, \ldots, x_h, y_1, \ldots, y_h\}$ such that conditions (i)-(iv) of Proposition \ref{strwell} are satisfied and moreover, $i\leq j$ whenever $\{x_i,y_j\}\in E(G)$.
\end{prop}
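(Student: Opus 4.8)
The assertion is an equivalence, and I would prove the two implications separately, each by an induction that strips off one matched pair of vertices at a time.

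For the implication ``such a labelling exists $\Rightarrow$ $G$ is Cohen--Macaulay'', I would prove by induction on $h$ that $\Delta(G)$ is vertex decomposable; it is then shellable, and, since $G$ is very well-covered so that $\Delta(G)$ is pure, Cohen--Macaulay. When $h=1$ we have $G=K_2$. In general the order condition makes $y_1$ a leaf with support $x_1$: as $Y$ is independent and $\{x_l,y_1\}\in E(G)$ forces $l\le 1$, we get $N_G(y_1)=\{x_1\}$. By the standard leaf lemma, $\Delta(G)$ is vertex decomposable iff $\Delta(G')$ is, where $G':=G\setminus\{x_1,y_1\}$. Being an induced subgraph on $\{x_2,\dots,x_h,y_2,\dots,y_h\}$, $G'$ inherits (ii)--(iv) and the order condition verbatim; removing the leaf $y_1$ and its support $x_1$ creates no isolated vertex (each remaining $x_l$ keeps $y_l$ and conversely), $Y\setminus\{y_1\}$ is still a maximal independent set, so $G'$ is again very well-covered on $2(h-1)$ vertices satisfying (i)--(iv) and the order condition, and the inductive hypothesis applies.

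For the converse, fix a labelling $V(G)=\{x_1,\dots,x_h,y_1,\dots,y_h\}$ as in Proposition~\ref{strwell} and define a relation on $\{1,\dots,h\}$ by $i\preceq j:\Longleftrightarrow\{x_i,y_j\}\in E(G)$. It is reflexive by (ii), and transitive: if $i,j,k$ are pairwise distinct with $\{x_i,y_j\},\{x_j,y_k\}\in E(G)$, then condition~(iii), applied with $z=y_k$ to the edges $\{y_k,x_j\}$ and $\{y_j,x_i\}$ of $G$, gives $\{y_k,x_i\}\in E(G)$, i.e.\ $i\preceq k$ (the cases where two of $i,j,k$ coincide being trivial). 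If, in addition, $\preceq$ is antisymmetric, then it is a partial order; choosing a linear extension and permuting the matched pairs $(x_i,y_i)$ accordingly preserves (i)--(iv) --- which refer only to $G$ and to the perfect matching $\{x_i,y_i\}$ --- and in the new order $\{x_i,y_j\}\in E(G)$ implies $i\preceq j$, hence $i\le j$. So everything comes down to deducing antisymmetry of $\preceq$ from the Cohen--Macaulay hypothesis.

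This last step is where I expect the real work to lie. If $\preceq$ is not antisymmetric, some $\preceq$-class $C$ has $s:=|C|\ge 2$; since $i\preceq j$ for all $i,j\in C$ while (iv) forbids edges inside $\{x_i:i\in C\}$, the subgraph of $G$ induced on $X_C\cup Y_C:=\{x_i:i\in C\}\cup\{y_i:i\in C\}$ is the complete bipartite graph $K_{s,s}$, whose independence complex is a disjoint union of two $(s-1)$-simplices --- disconnected, of dimension $s-1\ge 1$. Since a link of a Cohen--Macaulay complex is Cohen--Macaulay, it suffices to find an independent set $F$ of $G$ with $N_G[F]=V(G)\setminus(X_C\cup Y_C)$, for then ${\rm lk}_{\Delta(G)}(F)=\Delta(G\setminus N_G[F])$ is exactly this disconnected complex, contradicting Reisner's criterion. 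Taking $C$ to be $\preceq$-minimal, every vertex outside $X_C\cup Y_C$ has index lying above $C$ or incomparable to $C$, and $F$ is to be assembled from suitable $x$- and $y$-vertices with such indices; the delicate part is to dominate all of $V(G)\setminus(X_C\cup Y_C)$ while avoiding $N_G[X_C\cup Y_C]$, which forces a careful analysis of the edges among the $x$-vertices (by (iv), these join only $\preceq$-incomparable indices). A possibly cleaner route is to run the converse by the same peeling induction used above: when the $\preceq$-minimal class is a singleton $\{k\}$, the vertex $y_k$ is a leaf with support $x_k$ (and $x_k$ has no other pendant neighbour, since each $x_l$ has $y_l$ as a neighbour), so $\Delta(G\setminus\{x_k,y_k\})={\rm lk}_{\Delta(G)}(y_k)$ is Cohen--Macaulay and $G\setminus\{x_k,y_k\}$ is very well-covered on $2(h-1)$ vertices, and induction finishes the argument --- leaving precisely the statement that a $\preceq$-minimal class cannot have two or more elements, i.e.\ the same $K_{s,s}$ obstruction in its simplest configuration.
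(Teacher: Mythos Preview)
The paper does not prove this proposition; it is quoted from \cite[Lemma 3.1]{mmcrty} as a preliminary fact, so there is no argument in the paper to compare yours against. A few remarks on the proposal itself.

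For the forward implication your strategy---induction on $h$ via vertex decomposability, peeling off the pair $\{x_1,y_1\}$---is the right idea and is essentially what \cite{mmcrty} carries out, but the ``standard leaf lemma'' you invoke is not correct as stated: the leaf $y_1$ need not be a shedding vertex of $\Delta(G)$. For the path on $y_1,x_1,y_2,x_2$ (which satisfies all the hypotheses with $h=2$), the set $\{y_2\}$ is a facet of both ${\rm lk}_{\Delta(G)}(y_1)$ and ${\rm del}_{\Delta(G)}(y_1)$. Shedding $x_1$ instead always works---every facet of ${\rm del}_{\Delta(G)}(x_1)$ contains the now-isolated vertex $y_1$, while no facet of the link does---but then you must also show that ${\rm lk}_{\Delta(G)}(x_1)=\Delta(G\setminus N_G[x_1])$ is vertex decomposable, and $G\setminus N_G[x_1]$ generally loses the matching structure, so a bit more work is required than a one-line reduction to $G'$.

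For the converse you set up the preorder $\preceq$ correctly and rightly isolate antisymmetry as the crux, but (as you acknowledge) leave the argument unfinished. Here is a short completion that avoids constructing the elusive set $F$. Since $G$ is very well-covered, every facet $M$ of $\Delta(G)$ has $|M|=h$ and meets each edge $\{x_i,y_i\}$, hence contains exactly one of $x_i,y_i$ for every $i$. If $i\neq j$ lie in a common $\preceq$-class $C$ and $x_i\in M$, then $y_j\notin M$ (as $\{x_i,y_j\}\in E(G)$), so $x_j\in M$; thus every facet contains all of $X_C$ or all of $Y_C$. Both types of facets occur (the facet $Y$ contains $Y_C$, and $X_C$ is independent by (iv) and extends to a facet), and two facets of different types differ on at least $|C|\geq 2$ of the pairs $\{x_i,y_i\}$, hence cannot be adjacent in the facet--ridge graph. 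Therefore $\Delta(G)$ is not strongly connected, contradicting Cohen--Macaulayness.
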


The Alexander dual of the edge ideal of $G$ in $S$, i.e., the
ideal $$J(G)=I(G)^{\vee}=\bigcap_{\{x_i,x_j\}\in E(G)}(x_i,x_j),$$ is called the
{\it cover ideal} of $G$. It is
well-known and easy to check that $J(G)$ is minimally generated by the monomials $\prod_{x_i\in C}x_i$, where $C$ is a minimal vertex cover of $G$. We refer to \cite{s8} for a survey about homological and combinatorial properties of powers of cover ideals.

\begin{dfn}
Let $I$ be an ideal of $S$ and let ${\rm Min}(I)$ be the set of minimal primes of $I$. For every integer $k\geq 0$, the $k$th {\it symbolic power} of $I$,
denoted by $I^{(k)}$, is defined to be$$I^{(k)}=\bigcap_{\frak{p}\in {\rm Min}(I)} {\rm Ker}(S\rightarrow (S/I^k)_{\frak{p}}).$$
\end{dfn}

Let $I$ be a squarefree monomial ideal in $S$ and suppose that $I$ has the irredundant
primary decomposition $$I=\frak{p}_1\cap\ldots\cap\frak{p}_r,$$ where every
$\frak{p}_i$ is an ideal of $S$ generated by a subset of the variables of
$S$. It follows from \cite[Proposition 1.4.4]{hh} that for every integer $k\geq 0$, $$I^{(k)}=\frak{p}_1^k\cap\ldots\cap
\frak{p}_r^k.$$In particular, for every graph $G$ and for any integer $k\geq 0$, we have$$J(G)^{(k)}=\bigcap_{\{x_i,x_j\}\in E(G)}(x_i,x_j)^k.$$

Assume that $M$ is a graded $S$-module and let
$$0 \longrightarrow \cdots \longrightarrow  \bigoplus_{j}S(-j)^{\beta_{1,j}(M)} \longrightarrow \bigoplus_{j}S(-j)^{\beta_{0,j}(M)} \longrightarrow M \longrightarrow 0$$be the minimal graded free resolution of $M$.
The integers $\beta_{i,j}(M)$ are called the graded Betti numbers of $M$. The Castelnuovo--Mumford regularity (or simply, regularity) of $M$, denote by ${\rm reg}(M)$, is defined as follows:$${\rm reg}(M)=\max\{j-i|\ \beta_{i,j}(M)\neq0\}.$$Moreover,$${\rm pd}(M)=\max\{i|\ \beta_{i,j}(M)\neq 0, {\rm \ for \ some} \ j\}$$is called the {\it projective dimension} of $M$.

A homogenous ideal $I$ is said to have a {\it linear resolution}, if for some integer $d$, $\beta_{i,i+t}(I)=0$
for all $i$ and for every integer $t\neq d$. It is clear from the definition that if an ideal has a linear resolution, then all the minimal generators of $I$ have the same degree. It follows from the Eagon-Reiner's theorem \cite[Theorem 8.1.9]{hh} that a graph $G$ is Cohen-Macaualy if and only if its cover ideal $J(G)$ has a linear resolution. Let $I$ be a homogenous ideal which is generated in a single degree $d$. We say that $I$ has a {\it linear presentation}, if $\beta_{1,1+t}(I)=0$ for each integer $t\neq d$. It is obvious that $I$ has a linear presentation if it has a linear resolution.

Let $I$ be a monomial ideal of
$S=\mathbb{K}[x_1,\ldots,x_n]$ with minimal generators $u_1,\ldots,u_m$,
where $u_j=\prod_{i=1}^{n}x_i^{a_{i,j}}$, $1\leq j\leq m$. For every $i$
with $1\leq i\leq n$, let$$a_i:=\max\{a_{i,j}\mid 1\leq j\leq m\},$$and
suppose that $$T=\mathbb{K}[x_{1,1},x_{1,2},\ldots,x_{1,a_1},x_{2,1},
x_{2,2},\ldots,x_{2,a_2},\ldots,x_{n,1},x_{n,2},\ldots,x_{n,a_n}]$$ is a
polynomial ring over the field $\mathbb{K}$. Let $I^{{\rm pol}}$ be the squarefree
monomial ideal of $T$ with minimal generators $u_1^{{\rm pol}},\ldots,u_m^{{\rm pol}}$, where
$u_j^{{\rm pol}}=\prod_{i=1}^{n}\prod_{k=1}^{a_{i,j}}x_{i,k}$, $1\leq j\leq m$. The
monomial $u_j^{{\rm pol}}$ is called the {\it polarization} of $u_j$, and the ideal $I^{{\rm pol}}$
is called the {\it polarization} of $I$. It is known that $\beta_{i,j}(I)=\beta_{i,j}(I^{{\rm pol}})$, for each pair of integers $i$ and $j$ (see e.g., \cite[Corollary 1.6.3]{hh}).


\section{Symbolic powers with linear resolution} \label{sec3}

Our goal in this section is to characterize all graphs $G$ with the property that $J(G)^{(k)}$ has a linear resolution for some integer $k\geq 2$. To achieve this goal, we first give a positive answer to Question \ref{qwellres}.

\begin{prop} \label{linwell}
Let $G$ be a very well-covered graph and suppose that $J(G)^{(k)}$ has a linear resolution, for some integer $k\geq 1$. Then $J(G)$ has a linear resolution.
\end{prop}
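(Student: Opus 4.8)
The plan is to argue contrapositively: assuming $J(G)$ does \emph{not} have a linear resolution for a very well-covered graph $G$, I would exhibit a nonvanishing Betti number of $J(G)^{(k)}$ in too high a degree, so that $J(G)^{(k)}$ cannot have a linear resolution either. By the Eagon--Reiner theorem, $J(G)$ fails to have a linear resolution precisely when $G$ is not Cohen--Macaulay, so I may assume $G$ is very well-covered but not Cohen--Macaulay. Using Proposition \ref{strwell} I fix the bipartite-type labeling $V(G)=\{x_1,\dots,x_h,y_1,\dots,y_h\}$, and since $G$ is not Cohen--Macaulay, Proposition \ref{cmwell} says no relabeling of this form can additionally satisfy the condition ``$i\le j$ whenever $\{x_i,y_j\}\in E(G)$''; I would extract from this a concrete combinatorial obstruction — the standard one being an induced subgraph (after the labeling) witnessing a ``bad cycle'' among the $x_i,y_j$ edges.

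The core mechanism I expect to use is the interplay between symbolic powers of $J(G)$, polarization, and localization. First, $J(G)^{(k)}=\bigcap_{\{x_i,x_j\}\in E(G)}(x_i,x_j)^k$, and its polarization $\big(J(G)^{(k)}\big)^{\mathrm{pol}}$ is itself the cover ideal (equivalently Alexander dual of the edge ideal) of some explicit graph $G_k$ on the expanded vertex set; the key point is that $G$ arises, up to adding/removing isolated vertices and taking induced subgraphs, from $G_k$ in a controlled way, and that Betti numbers are preserved under polarization (\cite[Corollary 1.6.3]{hh}). Regularity and linearity of resolutions only improve under passing to induced subgraphs / restrictions in the Alexander-dual world (via Hochster's formula and the behavior of links of the independence complex), so if $J(G)^{(k)}$ had a linear resolution, a suitable restriction of the dual complex would force $\Delta(G)$ to be Cohen--Macaulay, i.e. $G$ to be Cohen--Macaulay — a contradiction. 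Concretely I would: (1) translate ``$J(G)^{(k)}$ has a linear resolution'' into ``$S/I(G_k)^{\mathrm{pol}}$ (or the associated complex) is Cohen--Macaulay'' via Eagon--Reiner applied in $T$; (2) identify an induced subcomplex / vertex-deletion of $G_k$ that reproduces (the independence complex of) $G$, using that deletions of very well-covered graphs along the $\{x_i,y_i\}$ pairs behave predictably; (3) use that Cohen--Macaulayness passes to this induced subcomplex via an explicit link or the fact that $\Delta(G)$ appears as a retract, concluding $G$ is Cohen--Macaulay.

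The main obstacle, and where I would spend the most care, is step (2): making precise how the independence complex of $G$ (equivalently the Stanley--Reisner structure of $J(G)$) sits inside that of the polarized symbolic power, so that the Cohen--Macaulay property genuinely descends. The combinatorics of polarizing $\bigcap(x_i,x_j)^k$ is manageable but bookkeeping-heavy, and one must check that the ``bad cycle'' obstruction to Cohen--Macaulayness of $G$ survives into $G_k$ rather than being killed by the extra variables — this is exactly the phenomenon that makes the $k\ge 2$ case subtle compared to $k=1$. A cleaner alternative I would also try is to avoid polarization entirely and instead use the characterization of very well-covered graphs $G$ with $J(G)^{(k)}$ having linear quotients / linear resolution via the known formula $\operatorname{reg}(S/J(G)^{(k)}) = \max_{\text{independent } A}\{\,k\cdot(\text{something}) + \ldots\}$ together with the fact (from \cite{s3}) that for very well-covered graphs the symbolic powers satisfy a persistence/stabilization statement; if $\operatorname{reg}(J(G)^{(k)})$ exceeds the generation degree $k\cdot(\text{ht})$ of $J(G)^{(k)}$, linearity fails, and non-Cohen--Macaulayness of $G$ is exactly what pushes this regularity up. Either route reduces the Proposition to the single-power Eagon--Reiner statement; the write-up hinges on choosing whichever of the two translations keeps the combinatorial obstruction most visibly intact.
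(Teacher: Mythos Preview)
Your overall strategy coincides with the paper's: polarize $J(G)^{(k)}$ to the cover ideal $J(G_k)$ of an explicit graph $G_k$ (this is \cite[Lemma~3.4]{s3}), apply Eagon--Reiner to deduce that $G_k$ is Cohen--Macaulay, and then pass to a suitable link of $\Delta(G_k)$ so that Cohen--Macaulayness descends. So the architecture of your proposal is correct and matches the paper.

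However, the point you flag as ``the main obstacle'' is a genuine gap, and your expected resolution is not quite right. You write that you want to find a face $F$ so that the link of $\Delta(G_k)$ at $F$ reproduces $\Delta(G)$. This does not work for $k\ge 2$: with $F=\{y_{i,p}:1\le i\le h,\ 2\le p\le k\}$ (the natural choice, and the one the paper makes), one computes $N_{G_k}[F]=F\cup\{x_{i,p}:1\le i\le h,\ 1\le p\le k-1\}$, so that $G_k\setminus N_{G_k}[F]$ lives on the vertices $\{x_{i,k},y_{i,1}\}$. On this vertex set the condition $p+q\le k+1$ kills every $\{x_{i,k},x_{j,k}\}$ edge (since $2k>k+1$), while all $\{x_{i,k},y_{j,1}\}$ edges with $\{x_i,y_j\}\in E(G)$ survive. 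Thus the link is $\Delta(G')$, where $G'$ is the \emph{bipartite} graph obtained from $G$ by deleting all edges inside $X=\{x_1,\dots,x_h\}$ --- not $\Delta(G)$ itself. No other choice of $F$ does better: to keep a copy of the $x$--$x$ edges one would need two $x$-layers $p,q$ with $p+q\le k+1$, but then those layers are also hit by $N_{G_k}[F]$ via the $\{x_i,y_i\}$ edges.

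So what you actually obtain is that $G'$ is Cohen--Macaulay (hence very well-covered), and the proof still needs one more idea to get back to $G$. The paper supplies it via \cite[Lemma~3.5]{crt}: Cohen--Macaulayness of the bipartite $G'$ produces a permutation $\sigma$ of $\{1,\dots,h\}$ with $\sigma(i)\le\sigma(j)$ whenever $\{x_{\sigma(i)},y_{\sigma(j)}\}\in E(G)$. Since conditions (i)--(iv) of Proposition~\ref{strwell} are invariant under such a relabeling, the relabeled $G$ now satisfies the extra ordering condition of Proposition~\ref{cmwell}, hence $G$ is Cohen--Macaulay. Your proposal is missing exactly this bridge from $G'$ to $G$; once you add it, the argument is complete and identical to the paper's. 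Your alternative ``regularity formula'' route is too vague as stated to evaluate.
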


\begin{proof}
Using Proposition \ref{strwell}, the vertices of $G$ can be labeled as$$V(G)=\{x_1, \ldots, x_h, y_1, \ldots, y_h\}$$such that the following conditions are satisfied.
\begin{itemize}
\item[(i)] $X:=\{x_1, \ldots, x_h\}$ is a minimal vertex cover of $G$ and $Y:=\{y_1, \ldots, y_h\}$ is a maximal independent subset of $G$.
\item[(ii)] $\{x_i, y_i\}\in E(G)$, for each integer $i=1, \ldots, h$.
\item[(iii)] If $\{z_i, x_j\}$, $\{y_j, x_k\}\in E(G)$, then $\{z_i, x_k\}\in E(G)$
for distinct indices $i$, $j$ and $k$ and for $z_i\in \{ x_i, y_i\}$.
\item[(iv)] If $\{x_i, y_j\} \in E(G)$, then $\{x_i, x_j\} \notin E(G)$.
\end{itemize}

We know from \cite[Lemma 3.4]{s3} that $(J(G)^{(k)})^{{\rm pol}}$ is the cover ideal of a graph $G_k$ with vertex set $$V(G_k)=\big\{x_{i,p}, y_{i,p}\mid 1\leq i\leq h \ {\rm and} \  1\leq p\leq k\big\},$$ and edge set
\begin{align*}
E(G_k) & =\big\{\{x_{i,p}, x_{j,q}\}\mid \{x_i, x_j\}\in E(G) \  {\rm and} \  p+q\leq k+1\big\}\\
& \cup \big\{\{x_{i,p}, y_{j,q}\}\mid \{x_i, y_j\}\in E(G) \  {\rm and} \  p+q\leq k+1\big\}.
\end{align*}
It follows from the assumption and \cite[Corollary 1.6.3]{hh} that $J(G_k)$ has a linear resolution. As a consequence, \cite[Theorem 8.1.9]{hh} implies that $G_k$ is a Cohen--Macaulay graph. Note that$$F=\{y_{i,j}\mid 1\leq i\leq h \ {\rm and} \ 2\leq j\leq k\}$$is an independent subset of $G_k$. It is obvious that$$N_{G_k}[F]=F\cup\{x_{i,j}\mid 1\leq i\leq h \ {\rm and} \ 1\leq j\leq k-1\}.$$Thus, $G_k\setminus N_{G_k}[F]$ is isomorphic to the bipartite graph which is obtained from $G$ by deleting the edges whose endpoints are in $X$. We denote this new graph by $G'$. In other words, $G'$ is the graph with vertex set $V(G')=V(G)$ and edge set$$E(G')=\big\{\{x_i, y_j\} \mid 1\leq i,j\leq h \ {\rm and} \ \{x_i, y_j\}\in E(G)\big\}.$$It follows that ${\rm lk}_{\Delta(G_k)}F=\Delta(G')$. Since $G_k$ is Cohen--Macaulay, we conclude that $G'$ is Cohen--Macaulay too. Therefore, $G'$ is a very well-covered graph. Using \cite[Lemma 3.5]{crt}, there exists a permutation $\sigma : [h]\rightarrow [h]$ with the property that $\sigma(i)\leq \sigma(j)$ whenever $\{x_{\sigma(i)}, y_{\sigma(j)}\}\in E(G)$. One can easily see that the graph $G$ with labeling $x_{\sigma(1)}, \ldots, x_{\sigma(h)}, y_{\sigma(1)}, \ldots, y_{\sigma(h)}$ of its vertices also satisfies conditions (i)-(iv) of Proposition \ref{strwell}. Hence, using Proposition \ref{cmwell}, we conclude that $G$ is a Cohen-Macaulay graph. As a consequence, \cite[Theorem 8.1.9]{hh} implies that $J(G)$ has a linear resolution.
\end{proof}

As we mentioned in Section \ref{sec2}, if a homogenous ideal $I$ has a linear resolution, the it must be generated in a single degree. Thus, in order to characterize the graphs $G$ with the property that $J(G)^{(k)}$ has a linear resolution, we should first determine the graphs $G$ such that $J(G)^{(k)}$ is generated in a single degree. This will be done in the next lemma. We recall that for a monomial ideal $I$, the unique set of minimal monomial generators of $I$ is denoted by $G(I)$.

\begin{lem} \label{singdeg}
Let $G$ be a graph without isolated vertices and assume that $J(G)^{(k)}$ is generated in a single degree, for some integer $k\geq 2$. Then $G$ is a very well-covered graph.
\end{lem}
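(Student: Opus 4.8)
The goal is to show that if $G$ has no isolated vertices and $J(G)^{(k)}$ is generated in a single degree for some $k\geq 2$, then every maximal independent set of $G$ has the same size (equivalently, every minimal vertex cover of $G$ has the same size), which is exactly the condition that $G$ be very well-covered once we invoke the absence of isolated vertices and parity. The key tool is the explicit description of the minimal generators of $J(G)^{(k)}=\bigcap_{\{x_i,x_j\}\in E(G)}(x_i,x_j)^k$. I would first recall/derive that a monomial $\prod_i x_i^{a_i}$ lies in $J(G)^{(k)}$ if and only if $a_i+a_j\geq k$ for every edge $\{x_i,x_j\}\in E(G)$; this is immediate since membership in $(x_i,x_j)^k$ for a monomial just means the sum of the exponents on $x_i$ and $x_j$ is at least $k$.

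\smallskip

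The heart of the argument is to exhibit, for two minimal vertex covers of different sizes, two minimal generators of $J(G)^{(k)}$ of different degrees. Given a minimal vertex cover $C$ of $G$, the squarefree monomial $x_C$ does not itself usually satisfy the edge condition for $k\geq 2$; instead the natural generator attached to $C$ is built by taking $x_i$ with exponent $k$ for $i$ ranging over some vertices and lower exponents elsewhere. The cleanest choice: for a \emph{minimal} vertex cover $C$, the monomial $u_C := \prod_{x_i\in C} x_i^{k} \big/(\text{something})$ — more precisely, I would look at monomials of the form $\prod_{x_i \in C} x_i^{a_i}$ where the $a_i$ are chosen as small as possible subject to $a_i + a_j \geq k$ on every edge. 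Since $V(G)\setminus C$ is independent, every edge has at least one endpoint in $C$; the ``worst'' edges are those with both endpoints in $C$ (if any), and edges with exactly one endpoint $x_i\in C$ force $a_i\geq k$ there (because the other endpoint lies outside $C$ and can be given exponent $0$, but to get a \emph{minimal} generator supported on $C$ we must cover that edge from within, forcing exponent $k$ on $x_i$; if $x_i$ has a neighbor outside $C$, minimality of $C$ is relevant here). Working this out, one gets a minimal generator whose degree is governed by $|C|$ together with combinatorial data of the induced subgraph $G[C]$.

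\smallskip

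To finish, suppose $G$ is \emph{not} unmixed, so there exist maximal independent sets $W_1, W_2$ with $|W_1| > |W_2|$, equivalently minimal vertex covers $C_1, C_2$ with $|C_1| < |C_2|$. I would then compare $\deg u_{C_1}$ and $\deg u_{C_2}$ directly from the formula obtained above and show they cannot be equal, contradicting the single-degree hypothesis. A convenient reduction: among all minimal vertex covers, pick one of minimum size $c$ and one of maximum size $c'$; for a minimal vertex cover $C$ that is moreover an \emph{independent} set of $G^c$ in the appropriate polarized sense, $G[C]$ has no edges — such $C$ exists (e.g. a minimum vertex cover in a graph without isolated vertices, or one can pass to the structure of $G$), and then the generator is simply $\prod_{x_i\in C} x_i^{k}$ of degree $k|C|$; comparing $k c$ with $k c'$ forces $c = c'$. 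The care needed is in verifying that both a smallest and a largest minimal vertex cover give rise to minimal generators of $J(G)^{(k)}$ of degrees $kc$ and $kc'$ respectively (the independence-of-$C$ issue), which is where the no-isolated-vertices hypothesis and a small case analysis on edges inside $C$ enter.

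\smallskip

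\textbf{Main obstacle.} The subtle point is that a minimal vertex cover $C$ need not be independent, so $\prod_{x_i\in C}x_i^k$ is not automatically a \emph{minimal} generator (it may be divisible by a generator with smaller exponents on the vertices inside an edge of $G[C]$). So the real work is: (a) identify, for a given minimal vertex cover $C$, the exact minimal generator(s) of $J(G)^{(k)}$ whose support is contained in $C$ and compute its degree in terms of $|C|$ and the edge structure of $G[C]$; and (b) argue that the extremal minimal vertex covers (minimum and maximum size) can be taken so that these degrees are $k|C|$, or more generally that the degree is a strictly increasing function of a quantity forced to differ when $|C_1|\neq|C_2|$. I expect (a) to be a short but delicate exponent-optimization, and (b) to hinge on choosing the minimal vertex covers wisely together with the hypothesis $k\geq 2$ (for $k=1$ every minimal vertex cover gives the squarefree generator $x_C$ of degree $|C|$, and single-degree already forces unmixedness — the $k\geq 2$ case must not be worse). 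Once unmixedness is established, the passage to ``very well-covered'' is routine given no isolated vertices (and the evenness of $n$ follows as in the definition recalled in Section~\ref{sec2}).
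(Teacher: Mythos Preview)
Your proposal has two issues, one minor and one genuine.

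\medskip
\textbf{The minor issue (your ``main obstacle'' is not an obstacle).} You worry that for a minimal vertex cover $C$ with internal edges, the monomial $\prod_{x_i\in C}x_i^{\,k}$ might not be a minimal generator of $J(G)^{(k)}$. In fact it always is. Minimality of $C$ means every $x_i\in C$ has some neighbor $x_j\notin C$; for the edge $\{x_i,x_j\}$, dividing $u^k$ (where $u=x_C$) by $x_i$ drops the exponent sum on that edge from $k+0$ to $(k-1)+0<k$, so $u^k/x_i\notin(x_i,x_j)^k\supseteq J(G)^{(k)}$. Edges inside $G[C]$ never obstruct this. This is exactly how the paper handles it, and it immediately gives that $J(G)^{(k)}$ has a minimal generator of degree $k|C|$ for \emph{every} minimal vertex cover $C$, forcing $G$ to be unmixed.

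\medskip
\textbf{The genuine gap.} You claim that once unmixedness is established, ``the passage to `very well-covered' is routine given no isolated vertices.'' This is false. Unmixed with no isolated vertices does \emph{not} imply very well-covered: the triangle $K_3$ and the complete graph $K_4$ are unmixed with no isolated vertices, but neither is very well-covered (the common cover size is not $n/2$). The condition ``very well-covered'' requires the common size $d$ of minimal vertex covers to equal $n/2$, and this does not come for free.

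The paper's argument for $d\le n/2$ is a separate step that genuinely uses the single-degree hypothesis for $k\ge 2$ (not just $k=1$): setting $v=x_1\cdots x_n$, one checks that $v^m\in J(G)^{(2m)}$ and $uv^m\in J(G)^{(2m+1)}$ for $u\in G(J(G))$, since every edge then carries total exponent at least $2m$ (resp.\ $2m+1$). Comparing degrees against the common generating degree $kd$ yields $n\ge 2d$. Combined with the known inequality $d\ge n/2$ for unmixed graphs without isolated vertices (Gitler--Valencia, or \cite[Theorem 0.1]{crt}), this pins down $d=n/2$. Your outline omits this half of the argument entirely.
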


\begin{proof}
Assume that $V(G)=\{x_1, \ldots, x_n\}$ and let $u$ be a monomial in the set of minimal monomial generators of $J(G)$. Note that $u^k\in J(G)^k\subseteq J(G)^{(k)}$. As $u\in G(J(G))$, for every integer $1\leq i\leq n$, there exists a vertex $x_j\in N_G(x_i)$ such that $u/x_i\notin (x_i, x_j)$. Hence,$$u^k/x_i=(u/x_i)^kx_i^{k-1}\notin (x_i, x_j)^k.$$In particular, for every integer $i$ with $1\leq i\leq n$ we have $u^k/x_i\notin J(G)^{(k)}$. Therefore, $u^k$ belongs to the set of minimal monomial generators of $J(G)^{(k)}$. As $J(G)^{(k)}$ is generated in a single degree, we deduce that$${\rm deg}(J(G)^{(k)})={\rm deg}(u^k)=k{\rm deg}(u).$$Since the above equalities hold for every monomial $u\in G(J(G))$, we conclude that $J(G)$ is generated in a single degree. Hence, $G$ is an unmixed graph. Let $d$ denote the degree of minimal monomial generators of $J(G)$. It follows that$${\rm deg}(J(G)^{(k)})=kd.$$

By \cite{gv} (see also \cite[Theorem 0.1]{crt}), we have $d\geq n/2$. Thus, to complete the proof, we must show that $d\leq n/2$. Set $v:=x_1\ldots x_n$. We divide the rest of the proof in two cases.

\vspace{0.3cm}
{\bf Case 1.} Assume that $k=2m$ is an even integer. It is clear that $v^m\in J(G)^{(k)}$. This implies that$$mn={\rm deg}(v^m)\geq {\rm deg}(J(G)^{(k)})=kd=2md.$$Therefore, $n\geq 2d$.

\vspace{0.3cm}
{\bf Case 2.} Assume that $k=2m+1$ is an odd integer. As above, let $u$ be a monomial in the set of minimal monomial generators of $J(G)$. Obviously, $uv^m\in J(G)^{(k)}$. Hence,$$d+mn={\rm deg}(uv^m)\geq {\rm deg}(J(G)^{(k)})=kd=(2m+1)d$$which means that $n\geq 2d$.
\end{proof}

It is obvious from the definitions that the condition of having a linear resolution is stronger that having a linear presentation. However, we will see in Proposition \ref{linpre} that these two concepts are equivalent for symbolic powers of cover ideals of very well-covered graphs. In order to prove this proposition, we need to recall the notion of Serre's condition.

Let $M$ be a nonzero finitely generated $S$-module and let $r\geq 1$ be a positive integer. Then $M$ is said to satisfy the {\it Serre's condition} ($S_r$), if for every prime ideal $\frak{p}$ of $S$, the inequality$${\rm depth}\ M_{\frak{p}}\geq \min\{r,\dim M_{\frak
{p}}\}$$holds true. We say that a simplicial complex $\Delta$ satisfies the Serre's condition ($S_r$) if its Stanley--Reisner ring satisfies the same condition. We refer to \cite{psty} for a survey on simplicial complexes satisfying the Serre's condition.

\begin{prop} \label{linpre}
Let $G$ be a very well-covered graph and assume that $k$ is a positive integer. Then $J(G)^{(k)}$ has a linear resolution if and only if $J(G)^{(k)}$ has a linear presentation.
\end{prop}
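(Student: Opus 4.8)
The plan is to reduce the statement to a known equivalence for simplicial complexes via polarization. By \cite[Lemma 3.4]{s3}, $(J(G)^{(k)})^{\mathrm{pol}}$ is the cover ideal $J(G_k)$ of the very well-covered graph $G_k$ whose vertex set and edges are described explicitly in the proof of Proposition \ref{linwell}. Since polarization preserves all graded Betti numbers \cite[Corollary 1.6.3]{hh}, it suffices to prove the claim for $J(G_k)$ in place of $J(G)^{(k)}$; that is, it is enough to show that for a very well-covered graph $H$, the cover ideal $J(H)$ has a linear resolution if and only if it has a linear presentation. The forward implication is trivial (as noted in the excerpt), so the content is the converse.

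For the converse, I would pass to the Alexander dual. Recall $J(H) = I(H)^{\vee} = I_{\Delta(H)}^{\vee}$, and by the Eagon--Reiner machinery (more precisely the Terai formula relating the regularity of $I_{\Delta}^{\vee}$ to the projective dimension of $\mathbb{K}[\Delta]$, and its graded refinement linking the linear strand of $I_{\Delta}^{\vee}$ to the top homology/depth behaviour of links of $\Delta$), "$J(H)$ has a linear presentation" translates into a Serre-type condition $(S_2)$ on $\Delta(H)$, while "$J(H)$ has a linear resolution" translates into Cohen--Macaulayness of $\Delta(H)$. Concretely, $J(H)$ has a linear resolution iff $\Delta(H)$ is Cohen--Macaulay (this is stated in the excerpt), and the standard refinement is that $J(H)$ has a linear presentation iff $\Delta(H)$ satisfies $(S_2)$. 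So the whole proposition comes down to: for a very well-covered graph $H$, the independence complex $\Delta(H)$ satisfies $(S_2)$ if and only if it is Cohen--Macaulay.

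This last equivalence should follow from the structural results already available. A very well-covered graph has a pure independence complex, so $\Delta(H)$ is pure of dimension $h-1$ where $|V(H)| = 2h$. For pure complexes, $(S_2)$ is equivalent to being connected in codimension one (strongly connected) together with the $(S_2)$ condition on links of vertices, and one can argue by induction on $h$: using the labeling from Proposition \ref{strwell}, the link of $y_i$ and the deletion of $x_i$ again yield independence complexes of smaller very well-covered graphs (this is exactly the kind of "localization" already exploited in the proof of Proposition \ref{linwell}, where $\mathrm{lk}_{\Delta(G_k)}F = \Delta(G')$ and $G'$ is very well-covered). The inductive step reduces $(S_2)$ of $\Delta(H)$ to $(S_2)$ of these smaller complexes plus strong connectedness, and then one invokes the known characterization (e.g. from \cite{crt} or \cite{mmcrty}, or a result of the author) that an $(S_2)$ very well-covered graph is already Cohen--Macaulay — equivalently, that conditions (i)--(iv) of Proposition \ref{strwell} together with the $(S_2)$ hypothesis force the reorderability condition "$i \le j$ whenever $\{x_i, y_j\} \in E(H)$" of Proposition \ref{cmwell}.

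The main obstacle I anticipate is the precise bookkeeping of the Alexander-dual dictionary: one must be careful that "linear presentation of $J(H)$" corresponds exactly to $(S_2)$ of $\Delta(H)$ and not to some weaker or off-by-one condition, and that the Terai-type isomorphism is invoked in its correctly graded form (Betti number $\beta_{1,1+t}(J(H))$ versus the reduced homology of links in a single codimension). Once that translation is nailed down, the combinatorial heart — that $(S_2)$ upgrades to Cohen--Macaulay for very well-covered graphs — is essentially contained in the structural propositions already cited, so the remaining work is organizing the induction and the reduction to vertex links cleanly.
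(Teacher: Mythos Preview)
Your proposal is correct and follows essentially the same route as the paper: polarize to $J(G_k)$ for the very well-covered graph $G_k$, then use the Alexander-dual dictionary (linear presentation $\Leftrightarrow$ $(S_2)$, linear resolution $\Leftrightarrow$ Cohen--Macaulay) to reduce to showing that $(S_2)$ implies Cohen--Macaulay for independence complexes of very well-covered graphs. The only difference is in the execution of this last step: where you sketch an induction on links, the paper takes the shorter path $(S_2)\Rightarrow$ strongly connected (a general fact) $\Rightarrow$ Cohen--Macaulay (by \cite[Theorem 0.2]{crt}, which says precisely that a very well-covered graph with strongly connected independence complex is Cohen--Macaulay); and your worry about the Alexander-dual bookkeeping is resolved by a direct citation of Yanagawa \cite[Corollary 3.7]{y}.
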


\begin{proof}
The "only if" part is obvious. Thus, we only prove the "if" part. So suppose that $J(G)^{(k)}$ has a linear presentation. By \cite[Corollary 1.6.3]{hh}, the ideal $(J(G)^{(k)})^{{\rm pol}}$ has a linear presentation. Using \cite[Proposition 3.1 and Lemma 3.4]{s3}, there is a very well-covered graph $G_k$ with $(J(G)^{(k)})^{{\rm pol}}=J(G_k)$. As $J(G_k)$ has a linear presentation, it follows from \cite[Corollary 3.7]{y} that $\Delta(G_k)$ satisfies the Serre's condition $(S_2)$. It is well-known that every simplicial complex satisfying the Serre's condition $(S_2)$ is strongly connected (see e.g. \cite[Page 2]{mt'}). In particular, $\Delta(G_k)$ is a strongly connected simplicial complex. Hence, we conclude from \cite[Theorem 0.2]{crt} that $G_k$ is a Cohen-Macaulay graph. Therefore, \cite[Theorem 8.1.9]{hh} implies that $J(G_k)$ has a linear resolution. Consequently, it follows from \cite[Corollary 1.6.3]{hh} that $J(G)^{(k)}$ has a linear resolution.
\end{proof}

We are now ready to prove the main result of this section.

\begin{thm} \label{main}
For any graph $G$ without isolated vertices, the following conditions are equivalent.
\begin{itemize}
\item[(i)] $J(G)^{(k)}$ has a linear resolution, for every integer $k\geq 1$.
\item[(ii)] $J(G)^{(k)}$ has a linear resolution, for some integer $k\geq 2$.
\item[(iii)] $J(G)^{(k)}$ has a linear presentation, for every integer $k\geq 1$.
\item[(iv)] $J(G)^{(k)}$ has a linear presentation, for some integer $k\geq 2$.
\item[(v)] $G$ is a Cohen-Macaulay very well-covered graph.
\end{itemize}
\end{thm}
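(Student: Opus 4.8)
The plan is to prove the chain of implications $(v)\Rightarrow(i)\Rightarrow(ii)\Rightarrow(iv)$ and $(i)\Rightarrow(iii)\Rightarrow(iv)$, and then close the loop with the substantial implication $(iv)\Rightarrow(v)$. The first several arrows are essentially formal: $(i)\Rightarrow(ii)$ and $(iii)\Rightarrow(iv)$ are trivial specializations, $(i)\Rightarrow(iii)$ and $(ii)\Rightarrow(iv)$ follow because a linear resolution forces a linear presentation. For $(v)\Rightarrow(i)$ I would invoke the generalization of the Herzog--Hibi result to very well-covered graphs, namely \cite[Theorem 3.6]{s3}: if $G$ is a very well-covered graph such that $J(G)$ has a linear resolution, then $J(G)^{(k)}$ has a linear resolution for every $k\geq 1$; and by the Eagon--Reiner theorem \cite[Theorem 8.1.9]{hh}, a Cohen--Macaulay very well-covered graph is precisely one whose cover ideal has a linear resolution. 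So the real content is the implication from a linear presentation of a single higher symbolic power back to the Cohen--Macaulayness of $G$.

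For $(iv)\Rightarrow(v)$, suppose $J(G)^{(k)}$ has a linear presentation for some $k\geq 2$. A linear presentation forces the ideal to be generated in a single degree, so Lemma \ref{singdeg} applies and $G$ is a very well-covered graph (here one should first dispose of isolated vertices, but the hypothesis already excludes them). Now that $G$ is known to be very well-covered, Proposition \ref{linpre} upgrades the linear presentation of $J(G)^{(k)}$ to a linear resolution of $J(G)^{(k)}$. At this point we are in the situation of Proposition \ref{linwell}: $G$ is very well-covered and some symbolic power $J(G)^{(k)}$ (indeed with $k\geq 1$) has a linear resolution, so Proposition \ref{linwell} gives that $J(G)$ itself has a linear resolution. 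Finally, the Eagon--Reiner theorem \cite[Theorem 8.1.9]{hh} translates "$J(G)$ has a linear resolution" into "$G$ is a Cohen--Macaulay graph"; combined with the fact that $G$ is very well-covered, this is exactly statement $(v)$.

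The only genuine subtlety is making sure that the hypotheses of the cited results line up: Lemma \ref{singdeg} requires $k\geq 2$ and no isolated vertices (both available), Proposition \ref{linpre} requires $G$ to be very well-covered (established in the previous step) and works for arbitrary positive $k$, and Proposition \ref{linwell} also requires $G$ very well-covered and any $k\geq 1$. So the argument threads cleanly through these three results once they are applied in the correct order. I would present $(iv)\Rightarrow(v)$ as the single nontrivial step and note that it, together with $(v)\Rightarrow(i)$, $(i)\Rightarrow(iii)\Rightarrow(iv)$ and $(i)\Rightarrow(ii)\Rightarrow(iv)$, yields the equivalence of all five conditions. I do not anticipate any real obstacle here, since the hard analytic work has already been packaged into Lemma \ref{singdeg} and Propositions \ref{linwell} and \ref{linpre}; the theorem is the bookkeeping that assembles them.
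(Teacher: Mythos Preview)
Your proposal is correct and follows essentially the same approach as the paper: both arguments assemble Lemma \ref{singdeg}, Proposition \ref{linpre}, Proposition \ref{linwell}, \cite[Theorem 3.6]{s3}, and the Eagon--Reiner theorem in the same way, with the only difference being that the paper first observes that each of (i)--(iv) forces $G$ to be very well-covered (via Lemma \ref{singdeg}) and then derives (i)$\Leftrightarrow$(iii), (ii)$\Leftrightarrow$(iv), (i)$\Rightarrow$(ii), (v)$\Rightarrow$(i), (ii)$\Rightarrow$(v), whereas you route everything through the single nontrivial step (iv)$\Rightarrow$(v). The logical content and the cited ingredients are identical.
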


\begin{proof}
If $G$ satisfies either of the conditions (i)-(iv), then $J(G)$ is generated in a single degree. Hence, using Lemma \ref{singdeg}, we deduce that $G$ is a very well-covered graph. Thus, the equivalences (i)$\Leftrightarrow$(iii) and (ii)$\Leftrightarrow$(iv) follow from Proposition \ref{linpre}. The implication (i)$\Rightarrow$(ii) is trivial. By \cite[Theorem 8.1.9]{hh} and \cite[Theorem 3.6]{s3}, (v) implies (i). To prove (ii)$\Rightarrow$ (v), as we mentioned above, it follows from (ii) and Lemma \ref{singdeg} that $G$ is a very well-covered graph. Then using Proposition \ref{linwell} and \cite[Theorem 8.1.9]{hh}, we conclude that $G$ is a Cohen-Macaulay graph.
\end{proof}

\begin{rems}
\begin{enumerate}
\item Assume that $I$ is a squarefree monomial ideal. In general, it is not true that if $I^{(k)}$ has a linear resolution, for some integer $k\geq 2$, then $I$ has the same property. For instance, let $I=(x_1x_2, x_2x_3, x_3x_4, x_4x_5, x_1x_5)$ be the edge ideal of the $5$-cycle graph. On can easily check that $I^{(2)}$ has a liner resolution but $I$ has not.
\item Assume that $I$ is a squarefree monomial ideal. In general, it is not true that if $I^{(2)}$ has a linear resolution then $I^{(k)}$ has the same property, for every integer $k\geq 2$. For example, let $I$ be the same ideal as above. Then $I^{(2)}$ has a liner resolution. On the other hand, $x_1^3x_2^3$ and $x_1x_2x_3x_4x_5$ belong to set of minimal monomial generators of $I^{(3)}$. In particular, $I^{(3)}$ is not generated in a single degree. Hence, it has not a linear resolution.
\end{enumerate}
\end{rems}


\section{Regularity of symbolic powers} \label{sec4}

In this section, we study the regularity function of symbolic powers of cover ideals of graphs. Mart${\rm\acute{i}}$nez-Bernal et al. \cite[Corollary 5.3]{mmvv} proved that the regularity of powers of cover ideals of every bipartite graph is a nondecreasing function. In Theorem \ref{main2}, we generalize this result by showing that the sequence $\big({\rm reg}(J(G)^{(k)})\big)_{k=1}^{\infty}$ is nondecreasing, for any arbitrary graph $G$.

\begin{thm} \label{main2}
Let $G$ be a graph. Then for every integer $k\geq 1$, we have$${\rm reg}(S/J(G)^{(k)})\leq {\rm reg}(S/J(G)^{(k+1)}).$$
\end{thm}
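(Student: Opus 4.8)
The plan is to exploit the polarization technique that already appears in this paper, reducing the statement about symbolic powers of $J(G)$ to a statement about ordinary cover ideals of the auxiliary graphs $G_k$. Recall from \cite[Lemma 3.4]{s3} (used in the proof of Proposition \ref{linwell}) that $(J(G)^{(k)})^{{\rm pol}} = J(G_k)$, where $G_k$ is an explicitly described graph on the vertex set $\{x_{i,p}, y_{i,p}\}$. Since polarization preserves all graded Betti numbers, and in particular the regularity, we have ${\rm reg}(S/J(G)^{(k)}) = {\rm reg}(T_k/J(G_k))$ for the appropriate polynomial ring $T_k$. Thus it suffices to prove that ${\rm reg}(T_k/J(G_k)) \leq {\rm reg}(T_{k+1}/J(G_{k+1}))$.

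The key observation is that $G_k$ arises as an induced subgraph of $G_{k+1}$ (inspect the edge sets: the $p+q \leq k+1$ constraint is more restrictive than $p+q \leq k+2$, and deleting the vertices $\{x_{i,k+1}, y_{i,k+1} : 1 \leq i \leq h\}$ from $G_{k+1}$ leaves exactly $G_k$). So the first step is to record this induced-subgraph relationship precisely. The second step is to invoke a monotonicity result for the regularity of cover ideals under passing to induced subgraphs. Specifically, one would want: if $H$ is an induced subgraph of a graph $H'$, then ${\rm reg}(S_H/J(H)) \leq {\rm reg}(S_{H'}/J(H'))$. Equivalently, via Terai's formula ${\rm reg}(J(G)) = {\rm pd}(S/I(G))$ (or ${\rm reg}(S/J(G)) = {\rm pd}(S/I(G)) - 1$), this translates to: projective dimension of edge ideals does not decrease when passing from an induced subgraph to the ambient graph, which is a standard fact since $\Delta(H)$ is the restriction $\Delta(H')|_{V(H)}$ and restriction of a simplicial complex to a subset of vertices cannot increase... wait, one must be careful about the direction. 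The cleanest route is the observation that if $H = H' \setminus A$ for a set of vertices $A$, then $I(H)$ is obtained from $I(H')$ by setting the variables in $A$ to zero, i.e. $S_{H}/I(H) = S_{H'}/(I(H') + (x : x \in A))$, and adding variables as a regular sequence, so ${\rm pd}(S_{H'}/I(H')) \geq {\rm pd}(S_{H}/I(H))$ — this is exactly the inequality we need, translated back through Terai duality.

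So the steps in order are: (1) use polarization to replace $J(G)^{(k)}$ by $J(G_k)$ and reduce the claim to ${\rm reg}(T_k/J(G_k)) \leq {\rm reg}(T_{k+1}/J(G_{k+1}))$; (2) verify from the explicit edge-set description that $G_k = G_{k+1} \setminus \{x_{i,k+1}, y_{i,k+1} : 1 \leq i \leq h\}$ as an induced subgraph; (3) apply Terai's formula ${\rm reg}(S/J(H)) = {\rm pd}(S/I(H)) - 1$ to both sides, reducing to ${\rm pd}(T_k/I(G_k)) \leq {\rm pd}(T_{k+1}/I(G_{k+1}))$; (4) prove this projective-dimension inequality by noting that $T_{k}/I(G_k) \cong \big(T_{k+1}/I(G_{k+1})\big)/(\text{images of the deleted variables})$ up to extension by a polynomial subring, so the projective dimension cannot drop. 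I expect step (2) — the bookkeeping to confirm the induced-subgraph identity from the combinatorial description of $E(G_k)$ — to be routine but the place where care is needed; the genuinely load-bearing step is the projective-dimension monotonicity in step (4), but that is a well-known fact about edge ideals (quotienting by a subset of the variables), so the main conceptual work is really just assembling the polarization and Terai-duality ingredients correctly.
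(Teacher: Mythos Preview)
Your overall architecture matches the paper's proof exactly: polarize, identify $(J(G)^{(k)})^{\rm pol}$ with $J(G_k)$, show $G_k$ is an induced subgraph of $G_{k+1}$, and then use monotonicity of ${\rm pd}(I(-))$ under induced subgraphs together with Terai's formula. (A minor point: for an arbitrary graph $G$ the polarization $G_k$ from \cite[Lemma~3.4]{s3} has only $x_{i,p}$ vertices; the $y$'s appear in the very well-covered setting of Proposition~\ref{linwell}.)

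However, your step (2) is wrong, and this is precisely where the real content lies. Deleting the top layer $\{x_{i,k+1}\}$ from $G_{k+1}$ does \emph{not} leave $G_k$. In $G_{k+1}$ the edge constraint is $p+q\le k+2$; after deleting the vertices with second index $k+1$ the remaining vertices have $p,q\le k$, but pairs with $p+q=k+2$ (e.g.\ $p=q=2$ when $k=2$) are still edges of the induced subgraph, whereas they are \emph{not} edges of $G_k$ (which requires $p+q\le k+1$). So the induced subgraph you describe strictly contains $G_k$ as a spanning subgraph but has extra edges, and your identification fails.

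The paper repairs exactly this point with a non-obvious choice: it deletes the \emph{middle} layer $W=\{x_{i,\lfloor (k+1)/2\rfloor+1}\}$ and then relabels via
\[
\varphi(x_{i,j})=\begin{cases} x_{i,j} & j\le \lfloor (k+1)/2\rfloor,\\ x_{i,j+1} & j\ge \lfloor (k+1)/2\rfloor+1,\end{cases}
\]
which one checks gives an isomorphism $G_k\cong G_{k+1}\setminus W$. The rest of your outline (steps (3) and (4), invoking Terai and the standard ${\rm pd}$ inequality for edge ideals of induced subgraphs) then goes through exactly as in the paper. So your plan is salvageable, but the ``routine bookkeeping'' you flagged in step (2) is in fact the one genuinely nontrivial move, and your proposed vertex deletion does not work.
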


\begin{proof}
Without lose of generality, we may suppose that $G$ has no isolated vertex. Let $V(G)=\{x_1, \ldots, x_n\}$ be the vertex set of $G$. Consider the squarefree monomial ideals $(J(G)^{(k)})^{{\rm pol}}\subseteq T_k$ and $(J(G)^{(k+1)})^{{\rm pol}}\subseteq T_{k+1}$, where$$T_r=\mathbb{K}[x_{i,p}\mid 1\leq i\leq n,  1\leq p\leq r],$$for each $r=k, k+1$. It follows from \cite[Corollary 1.6.3]{hh} that$${\rm reg}\big(T_r/(J(G)^{(r)})^{{\rm pol}}\big)={\rm reg}\big(S/J(G)^{(r)}\big).$$As a consequence, it is enough to prove that$${\rm reg}\big(T_k/(J(G)^{(k)})^{{\rm pol}}\big)\leq {\rm reg}\big(T_{k+1}/(J(G)^{(k+1)})^{{\rm pol}}\big).$$

By \cite[Lemma 3.4]{s3}, for $r=k, k+1$, the ideal $(J(G)^{(r)})^{{\rm pol}}$ is the cover ideal of a graph $G_r$, with vertex set$$V(G_r)=\{x_{i,p}\mid 1\leq i\leq n,  1\leq p\leq r\}$$and edge set$$E(G_r)=\{\{x_{i,p}, x_{j,q}\}\mid \{x_i, x_j\}\in E(G) \  {\rm and} \  p+q\leq r+1\}.$$Set$$W:=\{x_{i, \lfloor\frac{k+1}{2}\rfloor +1} \mid 1\leq i\leq n\}\subseteq V(G_{k+1}),$$and consider the map $\varphi : V(G_k)\rightarrow V(G_{k+1}\setminus W)$ which is defined as follows.

$$\varphi(x_{i,j}) =
\left\{
	\begin{array}{ll}
		x_{i,j}  & \mbox{if } 1\leq j\leq \lfloor\frac{k+1}{2}\rfloor \\
		x_{i,j+1} & \mbox{if } j\geq  \lfloor\frac{k+1}{2}\rfloor+1
	\end{array}
\right.$$

It is easy to see that $\varphi$ is an isomorphism between $G_k$ and $G_{k+1}\setminus W$. Hence, $G_k$ is an induced subgraph of $G_{k+1}$. It follows from \cite[Proposition 4.1.1]{j} that$${\rm pd}(I(G_k))\leq {\rm pd}(I(G_{k+1})).$$Using Terai's theorem \cite[Proposition 8.1.10]{hh}, we deduce that$${\rm reg}\big(T_k/(J(G_k)\big)\leq {\rm reg}\big(T_{k+1}/(J(G_{k+1})\big)$$which means that$${\rm reg}\big(T_k/(J(G)^{(k)})^{{\rm pol}}\big)\leq {\rm reg}\big(T_{k+1}/(J(G)^{(k+1)})^{{\rm pol}}\big).$$This completes the proof.
\end{proof}


\section{Maximum degree of generators of symbolic powers} \label{sec5}

As the last result of this paper, we study the maximum degree of generators of symbolic powers of cover ideals. Thanks to \cite[Theorem 5.15]{dhnt}, we know that ${\rm deg}(J(G)^{(k)})$ is not necessarily a linear function (even eventually). However, we will see in Theorem \ref{deg} and Corollary \ref{deguc} that ${\rm deg}(J(G)^{(k)})$ is a linear function, for interesting classes of graphs. In order to prove theses results, we need the following lemma.

\begin{lem} \label{del}
Let $G$ ba a graph with vertex set $V(G)=\{x_1, \ldots, x_n\}$. Assume that $x\in V(G)$ is a vertex of $G$. Set $u:=\prod_{x_i\in N_G(x)}x_i$ and $J'=J(G\setminus N_G[x])S$. Then $J(G)^{(k)}+(x)=u^kJ'^{(k)}+(x)$.
\end{lem}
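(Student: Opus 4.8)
The plan is to prove the two-sided containment of monomial ideals $J(G)^{(k)}+(x) = u^kJ'^{(k)}+(x)$ by comparing minimal generators (or rather, arbitrary monomials) modulo $(x)$. Throughout, write $N:=N_G(x)$ and $H:=G\setminus N_G[x]$, so that $J' = J(H)S$ and the variable set of $S$ splits as $\{x\}\cup N\cup V(H)$. The crucial point is the primary-decomposition description recalled in the preliminaries: $J(G)^{(k)}=\bigcap_{\{x_i,x_j\}\in E(G)}(x_i,x_j)^k$, and similarly $J'^{(k)}=\bigcap_{\{x_i,x_j\}\in E(H)}(x_i,x_j)^k$. Since every monomial ideal containing $(x)$ is determined by its monomials not divisible by $x$, it suffices to show that a monomial $w$ with $x\nmid w$ lies in $J(G)^{(k)}$ if and only if it lies in $u^kJ'^{(k)}$.

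First I would reduce to the generator level. For the inclusion $\supseteq$: note $u^k\in(x_i,x_j)^k$ for every edge $\{x_i,x_j\}$ incident to $x$ (indeed one of $x_i,x_j$ lies in $N$ and appears to the power $k$ in $u^k$), so $u^k\in\bigcap_{e\ni x}(x_i,x_j)^k$; and a generator of $J'^{(k)}$, being in $\bigcap_{e\in E(H)}(x_i,x_j)^k$, handles all edges of $G$ not incident to $x$, since any edge of $G$ with neither endpoint equal to $x$ and neither endpoint in $N$ is an edge of $H$ — wait, that last claim needs the edges with exactly one endpoint in $N$. An edge $\{x_i,x_j\}$ with $x_i\in N$, $x_j\notin N\cup\{x\}$ is already covered by $u^k\in(x_i)^k\subseteq(x_i,x_j)^k$. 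Edges with both endpoints in $N$ are likewise covered by $u^k$. Hence $u^kJ'^{(k)}\subseteq\bigcap_{e\in E(G)}(x_i,x_j)^k=J(G)^{(k)}$, giving $\supseteq$.

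For the reverse inclusion $\subseteq$, take a monomial $w\in J(G)^{(k)}$ with $x\nmid w$. For each edge $\{x_i,x\}\in E(G)$ (i.e. $x_i\in N$), the condition $w\in(x_i,x)^k$ together with $x\nmid w$ forces $x_i^k\mid w$; hence $u^k=\prod_{x_i\in N}x_i^k$ divides $w$, say $w=u^kw'$. It remains to check $w'\in J'^{(k)}=\bigcap_{e\in E(H)}(x_i,x_j)^k$. Fix an edge $\{x_i,x_j\}\in E(H)$; then $x_i,x_j\in V(H)$, so neither variable divides $u$, and from $w\in(x_i,x_j)^k$ we get $\deg_{x_i}(w)+\deg_{x_j}(w)\geq k$, i.e. $\deg_{x_i}(w')+\deg_{x_j}(w')\geq k$, so $w'\in(x_i,x_j)^k$. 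Thus $w'\in J'^{(k)}$ and $w\in u^kJ'^{(k)}$.

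The genuinely delicate bookkeeping — and the only place to be careful — is the case analysis on the edges of $G$ according to how their endpoints meet $\{x\}$, $N$, and $V(H)$, and in particular verifying that $u^k$ alone absorbs every edge incident to $N$ (so that $J'^{(k)}$ is only needed for the induced subgraph $H$); the degree/containment translations for monomial powers of the form $(x_i,x_j)^k$ are then routine. Finally, since both sides contain $(x)$ and agree on monomials not divisible by $x$, they are equal.
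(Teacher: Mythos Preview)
Your proof is correct and relies on the same key observations as the paper's: the primary-decomposition description of $J(G)^{(k)}$, the fact that $u^k$ absorbs every edge meeting $N_G(x)$, and that modulo $x$ the edges $\{x_i,x\}$ force $x_i^k\mid w$. The only difference is presentational: the paper works at the ideal level, pushing $(x)$ through the intersection via $\bigcap_e(x_i,x_j)^k+(x)=\bigcap_e\big((x_i,x_j)^k+(x)\big)$ and then discarding redundant components using $(x_i)^k+(x)\subseteq(x_i,x_j)^k+(x)$, whereas you argue monomial-by-monomial; the content is the same.
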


\begin{proof}
We have
\begin{align*}
& J(G)^{(k)}+(x)=\bigcap_{\{x_i,x_j\}\in E(G)}(x_i,x_j)^k+(x)=\bigcap_{\{x_i,x_j\}\in E(G)}\big((x_i,x_j)^k+(x)\big)\\ & =\bigg(\bigcap_{x_i\in N_G(x)}\big((x_i,x)^k+(x)\big)\bigg) \cap \bigg(\bigcap_{\substack{\{x_i,x_j\}\in E(G) \\ x\notin \{x_i,x_j\}}}\big((x_i,x_j)^k+(x)\big)\bigg)\\ & =\bigg(\bigcap_{x_i\in N_G(x)}\big((x_i)^k+(x)\big)\bigg) \cap \bigg(\bigcap_{\substack{\{x_i,x_j\}\in E(G) \\ x\notin \{x_i,x_j\}}}\big((x_i,x_j)^k+(x)\big)\bigg).
\end{align*}

Note that for every vertex $x_i\in N_G(x)$ and for every edge $\{x_i,x_j\}\in E(G)$, we have $(x_i)^k+(x)\subseteq (x_i,x_j)^k+(x)$. Thus, it follows from the above equalities that
\begin{align*}
& J(G)^{(k)}+(x)=\bigg(\bigcap_{x_i\in N_G(x)}\big((x_i)^k+(x)\big)\bigg) \cap \bigg(\bigcap_{\{x_i,x_j\}\in E(G\setminus N_G[x])}\big((x_i,x_j)^k+(x)\big)\bigg)\\ & =\bigg(\bigcap_{x_i\in N_G(x)}(x_i)^k \cap \bigcap_{\{x_i,x_j\}\in E(G\setminus N_G[x])}(x_i,x_j)^k\bigg)+(x)\\ & =\big(u^k\cap J'^{(k)}\big)+(x)=u^kJ'^{(k)}+(x).
\end{align*}
\end{proof}

We are now ready to prove the last result of this paper.

\begin{thm} \label{deg}
Let $\mathcal{H}$ be a family of graphs which satisfies the following conditions.
\begin{itemize}
\item[(i)] For every graph $G\in \mathcal{H}$ and every vertex $x\in V(G)$, the graph $G\setminus N_G[x]$ belongs to $\mathcal{H}$.
\item[(ii)] If $G\in \mathcal{H}$ has no isolated vertex, then it admits a minimal vertex cover with cardinality at least $\frac{|V(G)|}{2}$.
\end{itemize}
Then for every graph $G\in \mathcal{H}$ and every integer $k\geq 1$, we have$${\rm deg}(J(G)^{(k)})=k{\rm deg}(J(G)).$$
\end{thm}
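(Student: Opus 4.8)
The plan is to prove the two inequalities $\deg(J(G)^{(k)})\geq k\deg(J(G))$ and $\deg(J(G)^{(k)})\leq k\deg(J(G))$ separately; only the second will use the hypotheses on $\mathcal{H}$. For the lower bound I would repeat the argument in the proof of Lemma~\ref{singdeg}: choosing $u\in G(J(G))$ with $\deg(u)=\deg(J(G))$, one has $u^k\in J(G)^k\subseteq J(G)^{(k)}$, and for every variable $x_i$ dividing $u$ there is a neighbour $x_j$ of $x_i$ with $x_j\nmid u$, so the $x_i$-exponent of $u^k/x_i$ is $k-1$ while its $x_j$-exponent is $0$, whence $u^k/x_i\notin(x_i,x_j)^k$; since every proper divisor of $u^k$ divides some such $u^k/x_i$, we conclude $u^k\in G(J(G)^{(k)})$ and therefore $\deg(J(G)^{(k)})\geq k\deg(J(G))$. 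This half requires no assumption on $G$.

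For the upper bound I would induct on $|V(G)|+k$, the cases $k\leq 1$ being immediate since $J(G)^{(0)}=S$ and $J(G)^{(1)}=J(G)$. Fix $G\in\mathcal{H}$ and $k\geq 2$, and choose $m=\prod_i x_i^{a_i}\in G(J(G)^{(k)})$ with $\deg(m)=\deg(J(G)^{(k)})$. Suppose first that $a_i=0$ for some $i$; put $x:=x_i$, $H:=G\setminus N_G[x]$ (which lies in $\mathcal{H}$ by (i) and has fewer vertices), $u:=\prod_{x_j\in N_G(x)}x_j$, and $J':=J(H)S$. Since $x\nmid m$ and, by Lemma~\ref{del}, $m\in J(G)^{(k)}\subseteq J(G)^{(k)}+(x)=u^kJ'^{(k)}+(x)$, the monomial $m$ must be a multiple of $u^kg$ for some $g\in G(J'^{(k)})=G(J(H)^{(k)})$; as $u^kg$ itself lies in $J(G)^{(k)}$ and $m$ is a minimal generator, $m=u^kg$. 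Using the induction hypothesis for $H$ we get $\deg(m)=k|N_G(x)|+\deg(g)\leq k|N_G(x)|+\deg(J(H)^{(k)})=k\big(|N_G(x)|+\deg(J(H))\big)$. It then remains to check the combinatorial inequality $|N_G(x)|+\deg(J(H))\leq\deg(J(G))$: if $C'$ is a minimal vertex cover of $H$ of maximum size, then $C:=C'\cup N_G(x)$ is a vertex cover of $G$, and it is a \emph{minimal} one because each $v\in N_G(x)$ has the private edge $\{v,x\}$ with $x\notin C$, while each $v\in C'$ still keeps, inside $G$, a private edge $\{v,w\}$ with $w\in V(H)\setminus C'$ and hence $w\notin C$; thus $\deg(J(G))\geq|C|=|C'|+|N_G(x)|$, and this case is done.

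Suppose instead that $a_i\geq 1$ for all $i$. Then $G$ has no isolated vertex (an isolated vertex does not occur in $J(G)$, so it has exponent $0$ in every minimal generator of $J(G)^{(k)}$), so condition (ii) yields $2\deg(J(G))\geq|V(G)|=:n$. Writing $v:=x_1\cdots x_n$, the monomial $m/v$ lies in $J(G)^{(k-2)}$ and is in fact a minimal generator of it: if some proper divisor $g'$ of $m/v$ lay in $J(G)^{(k-2)}$, then $g'v$ would be a proper divisor of $m$ lying in $J(G)^{(k)}$, contradicting the minimality of $m$. Hence, by the induction hypothesis for $(G,k-2)$, $\deg(m)=\deg(m/v)+n\leq\deg(J(G)^{(k-2)})+n=(k-2)\deg(J(G))+n\leq k\deg(J(G))$, which completes the induction.

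The monomial-ideal bookkeeping — membership in $J(G)^{(k)}+(x)$, and the minimality passing between $m$ and $u^kg$, respectively between $m$ and $m/v$ — is routine, and I expect no real difficulty there. The step that will demand the most care is the combinatorial claim in the first case that $C'\cup N_G(x)$ is a \emph{minimal} vertex cover of $G$, since this is exactly the place where condition (i) alone is not enough and one must use the actual structure of $H=G\setminus N_G[x]$; a secondary point to get right is that the two reductions — shrinking the vertex set in the first case, lowering $k$ by $2$ in the second — are both absorbed by the single induction on $|V(G)|+k$, so that no circularity arises.
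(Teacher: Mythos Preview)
Your proof is correct and follows essentially the same two-case strategy as the paper's own argument: reduce via $m/v\in J(G)^{(k-2)}$ when every variable divides $m$, and via Lemma~\ref{del} to $G\setminus N_G[x]$ when some variable is absent. The only cosmetic differences are that the paper inducts on $k+|E(G)|$ rather than $k+|V(G)|$, and that it outsources the lower bound, the colon identity $(J(G)^{(k)}:v)=J(G)^{(k-2)}$, and the minimality of $C'\cup N_G(x)$ to external references, whereas you spell these out directly.
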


\begin{proof}
By replacing $\mathcal{H}$ with $\mathcal{H}\cup\{K_2\}$, we may assume that $K_2\in \mathcal{H}$. Let $m$ be the number of edges of $G$. We use induction on $k+m$. For $k=1$, there is nothing to prove. Therefore, assume that $k\geq 2$. Suppose $m=1$ and let $\{x_1, x_2\}$ be the unique edge of $G$. Then $J(G)=(x_1,x_2)$. Hence, ${\rm deg}(J(G))=1$ and ${\rm deg}(J(G)^{(k)})=k$. Thus, assume that $m\geq 2$. Without lose of generality, we may suppose that $G$ has no isolated vertex. By \cite[Lemma 3.1]{s7}, we have ${\rm deg}(J(G)^{(k)})\geq k{\rm deg}(J(G))$. Therefore, it is enough to prove that for every monomial $w\in G(J(G)^{(k)})$, the inequality ${\rm deg}(w)\leq k{\rm deg}(J(G))$ holds. Assume that $V(G)=\{x_1, \ldots, x_n\}$ and set $v:=x_1\ldots x_n$. We consider the following two cases.

\vspace{0.3cm}
{\bf Case 1.} Suppose $v$ divides $w$. Then $w/v$ belongs to the set of minimal monomial generators of $(J(G)^{(k)}:v)$. We know from \cite[Lemma 3.4]{s6} that $$(J(G)^{(k)}:v)=J(G)^{(k-2)}.$$Hence, using the induction hypothesis, we deduce that$${\rm deg}(w/v)\leq (k-2){\rm deg}(J(G)).$$Consequently,$${\rm deg}(w)\leq (k-2){\rm deg}(J(G))+n.$$ On the other hand, by assumption we have $n\leq 2{\rm deg}(J(G))$. Thus, the above inequality implies that ${\rm deg}(w)\leq k{\rm deg}(J(G))$.

\vspace{0.3cm}
{\bf Case 2.} Suppose $v$ does not divide $w$. Then there is a variable, say $x_j$ which does not divide $w$. Set $u:=\prod_{x_i\in N_G(x_j)}x_i$. We conclude from Lemma \ref{del} that $w$ belongs to the set of minimal monomial generators of $u^kJ(G\setminus N_G[x_j])^{(k)}$. In other words, $u^k$ divides $w$, and$$w/u^k\in G(J(G\setminus N_G[x_j])^{(k)}).$$It follows from the induction hypothesis that
\[
\begin{array}{rl}
{\rm deg}(w)-k{\rm deg}_G(x_j)={\rm deg}(w)-k{\rm deg}(u)\leq k{\rm deg}(J(G\setminus N_G[x_j]).
\end{array} \tag{1} \label{1}
\]
On the other hand, if $C$ is a minimal vertex cover of $G\setminus N_G[x_j]$, then $C\cup N_G(x_j)$ is a minimal vertex cover of $G$. Therefore,
\[
\begin{array}{rl}
{\rm deg}(J(G\setminus N_G[x_j]))+{\rm deg}_G(x_j)\leq {\rm deg}(J(G)).
\end{array} \tag{2} \label{2}
\]

Using (\ref{1}) and (\ref{2}), we deduce that$${\rm deg}(w)\leq k{\rm deg}(J(G\setminus N_G[x_j])+k{\rm deg}_G(x_j)\leq k{\rm deg}(J(G)).$$
\end{proof}

As a consequence of Theorem \ref{deg}, we obtain the following corollary.

\begin{cor} \label{deguc}
Let $G$ be a graph which is either unmixed or claw-free. Then for every integer $k\geq 1$, we have$${\rm deg}(J(G)^{(k)})=k{\rm deg}(J(G)).$$In particular, ${\rm deg}(J(G)^{(k)})$ is a linear function of $k$.
\end{cor}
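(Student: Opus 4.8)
The plan is to derive Corollary~\ref{deguc} from Theorem~\ref{deg} by exhibiting, for each of the two classes, a family $\mathcal{H}$ satisfying hypotheses (i) and (ii) of that theorem. First I would treat the unmixed case. Take $\mathcal{H}$ to be the class of all unmixed graphs. Condition (ii) is immediate: if $G$ is unmixed with no isolated vertex, then every minimal vertex cover has the same cardinality, namely $n - \alpha(G)$ where $\alpha(G)$ is the common size of a maximal independent set; since an independent set and a minimal vertex cover partition $V(G)$, and one of the two has size at least $n/2$, the minimal vertex cover has cardinality $\geq n/2$ (alternatively, invoke the bound $\mathrm{deg}(J(G)) \geq n/2$ already cited in the proof of Lemma~\ref{singdeg} via \cite{gv,crt}). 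For condition (i), I need that $G \setminus N_G[x]$ is again unmixed for every vertex $x$; this is a standard fact — it is well known that deleting the closed neighborhood of a vertex preserves unmixedness (indeed it preserves Cohen--Macaulayness, and more elementarily, maximal independent sets of $G \setminus N_G[x]$ are exactly the sets $W \setminus \{x\}$ for $W$ a maximal independent set of $G$ containing $x$, all of which have size $\alpha(G)-1$). So $\mathcal{H} = \{\text{unmixed graphs}\}$ works and Theorem~\ref{deg} applies.

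For the claw-free case, take $\mathcal{H}$ to be the class of all claw-free graphs. Condition (i) is clear: an induced subgraph of a claw-free graph is claw-free, and $G \setminus N_G[x]$ is an induced subgraph. The real content is condition (ii): a claw-free graph with no isolated vertex admits a minimal vertex cover of cardinality at least $|V(G)|/2$, equivalently it has a maximal independent set of size at most $|V(G)|/2$, equivalently $\alpha(G) \le n/2$ — wait, that is not what is needed; what is needed is the existence of \emph{one} minimal vertex cover of size $\geq n/2$, i.e.\ the existence of one maximal independent set of size $\le n/2$, i.e.\ $i(G) \le n/2$ where $i(G)$ is the independent domination number (the minimum size of a maximal independent set). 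So the claim to establish is: every claw-free graph without isolated vertices has a maximal independent set of size at most $n/2$. I would prove this directly: greedily build a maximal independent set while charging; the point is that in a claw-free graph, the neighborhood of any vertex contains no independent set of size $3$, so when we add a vertex $v$ to the independent set we "kill" $\geq 1$ other vertex and the structure prevents the independent set from swallowing more than half. A clean way: it is known (and I would cite or reprove) that for a claw-free graph $G$ with no isolated vertices, $i(G) \le n/2$; this follows because claw-free graphs have a perfect matching or near-perfect matching structure on each component after removing isolated vertices — more precisely, one uses that a connected claw-free graph on an even number of vertices has a perfect matching (a theorem of Sumner and Las Vergnas), and a maximal independent set picks at most one endpoint from each matching edge, hence has size $\le n/2$; for odd components one is off by at most a controlled amount, but since we only need a minimal vertex cover of size $\ge n/2$ and the matching argument gives an independent set of size $\le \lceil n/2 \rceil$ per component, a small case analysis (or the observation that an isolated-vertex-free component on an odd number of vertices still has a maximal independent set of size $\le (n-1)/2 < n/2$ via a near-perfect matching plus the claw-free obstruction on the unmatched vertex) closes the gap.

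Having verified (i) and (ii) for both families, Theorem~\ref{deg} gives $\mathrm{deg}(J(G)^{(k)}) = k\,\mathrm{deg}(J(G))$ for every $k \ge 1$, and since $\mathrm{deg}(J(G))$ is a constant (the height-normalized maximum cover size, independent of $k$), this is manifestly a linear function of $k$, proving the "in particular" clause.

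I expect the main obstacle to be condition (ii) for claw-free graphs: unlike the unmixed case where it is essentially a restatement of a cited inequality, here one must actually produce a small maximal independent set, and the cleanest route goes through the Sumner--Las Vergnas theorem on perfect matchings in connected claw-free graphs (with care taken over components of odd order and the reduction to isolated-vertex-free subgraphs). If one wants to avoid that theorem, an alternative is a direct discharging/greedy argument exploiting that every closed neighborhood in a claw-free graph has independence number $\le 2$, but making that yield the exact bound $n/2$ rather than something weaker requires some bookkeeping; I would present the matching-based proof as the primary argument since it is short once the matching theorem is invoked.
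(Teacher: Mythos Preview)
Your plan is exactly the paper's: apply Theorem~\ref{deg} to the class of unmixed (resp.\ claw-free) graphs after checking hypotheses (i) and (ii). The paper in fact takes $\mathcal{H}=\{\text{unmixed or claw-free graphs}\}$ as a single family and disposes of (ii) purely by citation---\cite{gv} and \cite[Theorem 0.1]{crt} for the unmixed case, and the proof of \cite[Theorem 3.7]{s7} for the claw-free case---without attempting any self-contained argument, and leaves (i) implicit.

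One correction: your ``direct'' argument for (ii) in the unmixed case is a non-sequitur. Saying that a maximal independent set and its complementary minimal vertex cover partition $V(G)$ and that \emph{one} of them has size $\geq n/2$ does not tell you \emph{which} one; this is precisely the nontrivial content of the cited inequality $\mathrm{deg}(J(G))\geq n/2$ for unmixed graphs without isolated vertices, and you should rely on the citation, as the paper does. Your matching-based sketch for the claw-free case (via Sumner--Las~Vergnas) is a reasonable route to an independent proof, though as you note the odd-component case needs care; the paper sidesteps all of this by citing \cite{s7}.
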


\begin{proof}
By \cite{gv} (see also \cite[Theorem 0.1]{crt}), every unmixed graph $G$ without isolated vertices has a minimal vertex cover with cardinality at least $\frac{|V(G)|}{2}$. Also, we know from the proof of \cite[Theorem 3.7]{s7} that every claw-free graph $G$ without isolated vertices has a minimal vertex cover with cardinality at least $\frac{|V(G)|}{2}$. Let $\mathcal{H}$ denote the class of graphs which are either unmixed or claw-free. Then the assertion follows from Theorem \ref{deg}.
\end{proof}





\end{document}